\documentclass[
11pt,  reqno]{amsart}
\usepackage{amsmath,amssymb,amscd,amsthm}

\usepackage{color}
\usepackage{hyperref}

\usepackage{cases}

\usepackage{bm}

\allowdisplaybreaks[3]

\newtheorem{theorem}{Theorem} [section]

\newtheorem{lemma}[theorem]{Lemma}
\newtheorem{proposition}[theorem]{Proposition}
\newtheorem{remark}[theorem]{Remark}
\newtheorem{definition}[theorem]{Definition}

\DeclareMathOperator*{\esssup}{ess \ sup}

\newcommand{\Z}{\mathbb{Z}}
\newcommand{\R}{\mathbb{R}}
\newcommand{\C}{\mathbb{C}}
\newcommand{\T}{\mathbb{T}}

\newcommand{\N}{\mathbb{N}}

\newcommand{\TT}{\mathcal{T}}

\newcommand{\Ul}{\mathcal U}
\newcommand{\Vl}{\mathcal V}
\renewcommand{\H}{\mathcal H}

\newcommand{\uu}{\mathfrak u}
\newcommand{\fv}{\mathfrak v}

\newcommand{\dl}{\delta}

\newcommand{\eps}{\varepsilon}

\newcommand{\G}{\Gamma}

\newcommand{\Ld}{\Lambda}
\newcommand{\s}{\sigma}

\newcommand{\ft}{\widehat}
\newcommand{\Ft}{{\mathcal{F}}}
\newcommand{\wt}{\widetilde}
\newcommand{\cj}{\overline}
\newcommand{\dx}{\partial_x}

\newcommand{\dt}{\partial_t}

\renewcommand{\l}{\ell}

\newcommand{\les}{\lesssim}

\newcommand{\jb}[1]
{\langle #1 \rangle}

\newcommand{\nz}{P_{\neq 0}}

\numberwithin{equation}{section}
\numberwithin{theorem}{section}

\let\Re=\undefined\DeclareMathOperator*{\Re}{Re}
\let\Im=\undefined\DeclareMathOperator*{\Im}{Im}

\title[Nowhere continuity of the flow map of dNLS system on $\T$]
{Nowhere continuity of the flow map of \\ an integrable derivative nonlinear Schr\"odinger system
on the torus}

\author[T.~Kondo]
{Toshiki Kondo}

\address{
Toshiki Kondo\\
Department of Mathematics\\\
Graduate School of Advanced Science and Engineering\\
Hiroshima University\\
1-3-1 Kagamiyama,
Higashi-Hiroshima, 739-8526
Japan}
\email{d263049@hiroshima-u.ac.jp}

\subjclass[2020]{35Q55}

\keywords{Schr\"odinger system;
integrable;
well-posedness;
ill-posedness;
non-existence;
}

\begin{document}

\begin{abstract}
We consider a derivative nonlinear Schr\"odinger system
called the Chen-Lee-Liu type system on the torus.
This system is known as a completely integrable system.
We prove the flow map fails to be continuous at every point 
in the Sobolev space $H^s(\T) \times H^s(\T)$.
Moreover, we establish an additional condition required for the flow map to be continuous.
For the discontinuity, we take a sequence converging to the initial data for which the corresponding solutions do not exist. 
\end{abstract}

\maketitle



\section{Introduction}

We consider the following Cauchy problem for the derivative nonlinear Schr\"odinger system on the torus: 
\begin{equation}
\left\{
\begin{aligned}
&i \dt u - \dx^2 u - i u v \dx u = 0, && (t,x) \in \R \times \T,
\\
&i \dt v + \dx^2 v - i u v \dx v = 0, && (t,x) \in \R \times \T,
\\
&(u, v)|_{t=0} = (u_0, v_0), && x \in \T,
\end{aligned}
\right.
\label{CLL}
\end{equation}
where $\T : = \R / 2 \pi \Z$, $u$ and $v$ are $\C$-valued unknown functions. 
The initial data $(u_0, v_0)$ is given in the Sobolev space
\[
\H^s(\T) := H^s(\T) \times H^s(\T)
\] 
equipped with the norm
\[
\| (f ,g) \|_{\H^s} := \|f\|_{H^s} + \|g\|_{H^s}. 
\]

Chen-Lee-Liu \cite{CLL79} developed a method for determining the integrability 
of a given nonlinear Hamiltonian system. 
By employing this framework, 
it follows that \eqref{CLL} is a completely integrable system. See also \cite{AKNS74}.

From the completely integrability, we obtain that \eqref{CLL} has infinite conserved quantities.
For example,
\begin{align}
M(u,v) &:= \int_\T u(t,x) v(t,x) dx,
\label{conser}
\\
E_1(u,v) &:= \int_\T  u(t,x) \dx v(t,x) dx,
\notag
\\
E_2(u,v) &:= \int_\T \Big( i \dx u (t,x) \dx v (t,x) - u(t,x)^2 v(t,x) \dx v(t,x) \Big) dx,
\notag
\end{align}
and so on.

By substituting $\cj u$ to $v$ in \eqref{CLL}, the following derivative nonlinear Schr\"odinger equation (DNLS) is obtained:
\begin{equation}
i \dt u + \dx^2 u - i |u|^2 \dx u = 0.
\label{dNLS}
\end{equation}
This is a fundamental model in ultrashort self-steepening optical pulses \cite{MMW07}. 
Let us review previous studies about \eqref{dNLS}.
In the real line setting,
\eqref{dNLS} is often studied together with the following derivative nonlinear Schr\"odinger equation:
\begin{equation}
i \dt u + \dx^2 u - i \dx (|u|^2 u) = 0,
\label{dNLS2}
\end{equation}
since \eqref{dNLS} is rewritten to \eqref{dNLS2} through the gauge transformation 
\cite{Ku84, Ha93}.
Since there are many studies on \eqref{dNLS} and \eqref{dNLS2}, we refer here only to some of them,
see \cite{TsFu80, TsFu81, Ha93, Ta99, GuWu17, JLPS20, BaPe22, KNV23, HKNV26}.

Even in the periodic setting, 
\eqref{dNLS} and \eqref{dNLS2} have also been extensively studied.
See \cite{TsFu80, TsFu81, He06, GrHe08, AmSi15}.
Moreover, there has also been some studies on the global well-posedness for \eqref{dNLS2}.
For instance, see  \cite{Mo17, KNV23, BaPe24}.
In contrast to the case of the real line setting, 
applying a gauge transformation does not make \eqref{dNLS2} exactly coincide with \eqref{dNLS}.
See Remark \ref{rem:gauge}.

Recently, nonlocal models associated with integrable systems have attracted much attention, 
and nonlocal versions of the modified Korteweg-de Vries equation \cite{AbMu17}, 
the nonlinear Schr\"odinger equation \cite{AbMu13}, and 
the derivative nonlinear Schr\"odinger equations \cite{AbMu17} have been studied.
The nonlocal equation associated with \eqref{dNLS},
\begin{equation}
i \dt u + \dx^2 u - i u u^\ast \dx u = 0,
\label{NdNLS}
\end{equation} 
was derived by Shi-Shen-Zhao \cite{SSZ19}, where $u^\ast(t,x) = \cj {u (t,-x)}$. 
This derivation is carried out by applying the reduction $v = u^\ast$ to the equation \eqref{CLL}.
Moreover, using the same method, 
they listed some nonlocal models based on \eqref{CLL}.

As a well-posedness study of \eqref{NdNLS},
Chen-Lu-Wang \cite{CLW23} proved the global well-posedness for \eqref{NdNLS} 
in very rough function spaces that contain $H^s(\R)$ for every $s \in \R$ and whose Fourier transforms are supported in a half-space.
Moreover, Nakanishi-Wang \cite{NaWa25} showed the global well-posedness of general evolution equations including \eqref{NdNLS} for distributions on the Fourier half space
for both the real line setting and the periodic setting.

In this paper, 
we prove that the flow map of \eqref{CLL} is discontinuous at every point in $\H^s(\T)$.
Moreover, 
we show an additional condition required for the flow map of \eqref{CLL} to be continuous. 
Before stating the main result, we define some notation.
\begin{definition}
\label{def:sol}
Let $s>\frac 32$, $T > 0$, and $\phi \in H^s(\T)$.
We say that $(u,v)$ is a solution to \eqref{CLL} in $\H^s(\T)$ on $[0,T]$
if $(u,v)$ satisfies the following two conditions:

\begin{enumerate}
\item
$(u,v) \in C ([0,T]; \H^s(\T))$;

\item
For any $\chi \in C_c^\infty([0,T) \times \T)$,%
\footnote{Here, $C_c^\infty([0,T) \times \T)$ denotes the space of smooth functions with compact support in $[0,T) \times \T$.
}
we have
\begin{align*}
&- i \int_0^T \int_\T u (t,x) \dt \chi(t,x) dx dt
- i \int_\T u_0 (x) \chi (0,x) dx
\\
&\quad
- \int_0^T \int_\T u(t,x) \dx^2 \chi(t,x) dx dt
\\
&=
i \int_0^T \int_\T u(t,x) v(t,x) \dx u(t,x) \chi (t,x) dx dt
\end{align*}
and
\begin{align*}
\\
&- i \int_0^T \int_\T v (t,x) \dt \chi(t,x) dx dt
- i \int_\T v_0 (x) \chi (0,x) dx
\\
&\quad
+ \int_0^T \int_\T v(t,x) \dx^2 \chi(t,x) dx dt
\\
&=
i \int_0^T \int_\T u(t,x) v(t,x) \dx v(t,x) \chi (t,x) dx dt.
\end{align*}
\end{enumerate}
A solution on $[-T,0]$ is defined in a similar manner.
\end{definition}
The second condition (ii) in Definition \ref{def:sol} means that 
$u$ satisfies \eqref{CLL} in the sense of distribution.

The following theorem is the main result in the present paper.

\begin{theorem}
\label{thm:IP}
Let $s > \frac 32$. 
The flow map of \eqref{CLL} is discontinuous at any point $(u_0, v_0) \in \H^s(\T)$.
Specifically,
for any $(u_0, v_0) \in \H^s(\T)$, 
there exists a sequence $\{ (u_{0,n}, v_{0,n}) \}_{n \in \N}$ converging to $(u_0, v_0)$
in $\H^s(\T)$ such that no corresponding solutions $(u_n, v_n)$ in $\H^s(\T)$ 
exist on either $[0,T]$ or $[-T,0]$.
\end{theorem}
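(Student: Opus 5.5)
The plan is to exploit the conserved quantity $M(u,v)=\int_\T uv\,dx$ from \eqref{conser}. Its spatial mean acts as a constant transport coefficient in both equations. When $\Im M\neq0$, this coefficient produces exponential growth $e^{c|k|t}$ of high Fourier modes of one sign of $k$ for $t>0$, and of the other sign for $t<0$. Turning this growth into non-existence forces every $\H^s$ solution to be smoother than $H^s$ on a frequency half-line. Data that are exactly $H^s$ on both half-lines then admit no solution in either time direction.

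\emph{Step 1 (choice of data).} Given $(u_0,v_0)\in\H^s(\T)$, first add a small smooth perturbation to obtain data with $\Im M\neq0$; this is possible since $M$ is a nondegenerate bilinear form. Then add a tail
\[
\eps_n\sum_{|k|\ge n}|k|^{-s-\frac12}(\log|k|)^{-1}e^{ikx}
\]
to $u_0$ (and similarly to $v_0$ if convenient), with $\eps_n\to0$. This tail is small in $H^s$, but its restriction to $k>0$ and to $k<0$ each fails to lie in $H^{s+\frac12}$. In particular the data converge to $(u_0,v_0)$ in $\H^s$.

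\emph{Step 2 (gauge).} Suppose $(u,v)\in C([0,T];\H^s)$ solves \eqref{CLL} in the sense of Definition~\ref{def:sol}. First check that $M$ is conserved for such solutions; this uses $s>\frac32$, so that $u,v,\dx u,\dx v$ are continuous and the weak formulation can be tested against products. Next write
\[
uv\,\dx u=\tfrac{M}{2\pi}\dx u+P_{\neq0}(uv)\,\dx u .
\]
Remove the nonconstant derivative term by the gauge $U=u\exp\bigl(-\tfrac12\dx^{-1}P_{\neq0}(uv)\bigr)$, and analogously $V$ for $v$ with the appropriate sign. The equation for $U$ then becomes
\[
i\dt U-\dx^2U-\tfrac{iM}{2\pi}\dx U=F,
\]
where $F$ has no loss of derivatives: $F\in C([0,T];H^{s-1})$, and $\dt F\in C([0,T];H^{s-3})$ by the equation. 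The gauge is an $H^s$-isomorphism close to the identity, so the Step~1 tail property transfers from $u_0$ to $U_0$ up to smoother errors.

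\emph{Step 3 (Duhamel backwards in time).} In Fourier variables,
\[
\ft U(t,k)=e^{-it\lambda_k}\ft U_0(k)+\int_0^t e^{-i(t-\tau)\lambda_k}\ft F(\tau,k)\,d\tau ,
\]
with $\lambda_k=-k^2+\tfrac{M}{2\pi}k$ up to signs, so that $\Im\lambda_k=\tfrac{\Im M}{2\pi}k$. On the half-line where $|e^{-it\lambda_k}|=e^{c|k|t}$, multiply by $e^{it\lambda_k}$ and solve for $\ft U_0(k)$. The term $e^{it\lambda_k}\ft U(t,k)$ is $O(e^{-c|k|t})\|U(t)\|_{H^s}$, which is negligible. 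In the integral, integrate by parts in $\tau$ using $e^{i\tau\lambda_k}=(i\lambda_k)^{-1}\dt e^{i\tau\lambda_k}$. Because $|\lambda_k|\sim k^2$, this gains two derivatives against the loss of $\dt F\in H^{s-3}$. The boundary terms are $\ft F(0,k)/(i\lambda_k)$ and an exponentially small term at $\tau=t$. The remaining integral is controlled by
\[
\int_0^t e^{-c|k|\tau}\,|\lambda_k|^{-1}\,|\dt\ft F(\tau,k)|\,d\tau\lesssim |k|^{-3}\sup_\tau|\dt \ft F(\tau,k)| .
\]
Summing over $k$ shows that $P_\pm U_0\in H^{s+\frac12}$ (indeed better) on the growing half-line, which contradicts Step~1. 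The case $[-T,0]$ is identical with the other half-line.

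The main obstacle I expect is Step~2: making the gauge rigorous for merely distributional solutions with $s>\frac32$, and verifying that $F$ and $\dt F$ have the claimed regularity. In particular the terms involving $\dt\bigl(\dx^{-1}P_{\neq0}(uv)\bigr)$ must be shown to lose at most two derivatives. I would first verify this by computing $\dt(uv)$ from the system: the dispersive parts combine into $\dx(\dx u\,v-u\,\dx v)$ up to signs, so $\dx^{-1}$ removes exactly one derivative.
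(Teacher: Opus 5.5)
\textbf{There is a genuine gap in Step~3: as set up, it yields no smoothing beyond $H^s$.} Your overall scheme matches the paper's: conservation of $M$, a gauge removing the non-constant transport term, backward Duhamel on the half-line where $e^{-(\Im M)kt}$ grows, and data that are exactly $H^s$ on both half-lines. The failure comes from gauging $u$ rather than $\dx u$.

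Take $U=e^{\phi}u$. The phase must be $\phi=\frac i2\dx^{-1}(uv)$; your real exponent $-\frac12\dx^{-1}\nz(uv)$ does not cancel $i\nz(uv)\dx u$. A direct computation gives
\[
F=\left(i\dt\phi-\dx^2\phi\right)U+(\text{terms in }H^s),\qquad i\dt\phi-\dx^2\phi=-iu\,\dx v+(\text{an }H^s\text{ function}).
\]
The gauge cancels the self-transport term $v\,\dx u$, but the cross term $-iu(\dx v)U$ survives. So $F$ lies only in $H^{s-1}$, one derivative below $U\in H^s$. Your claim that $F$ has no loss of derivatives fails relative to the unknown.

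Integration by parts in time cannot repair this. The factor $|\lambda_k|^{-1}\sim k^{-2}$ is used up exactly by $\dt F\in H^{s-3}$, and $\int_0^t e^{-c|k|\tau}d\tau\les |k|^{-1}$ supplies one more power. Hence $|k|^{-3}\,|\dt\ft F(\tau,k)|$ lands in $H^{s}$, not $H^{s+\frac12}$: the net gain is zero.

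Even that is optimistic. A $\sup_\tau$ inside the $\ell^2_k$-sum is not controlled by $\sup_\tau\|\dt F(\tau)\|_{H^{s-3}}$. Using Minkowski instead, you only have $\int_0^t\sup_k\jb{k}^{a}e^{-c|k|\tau}\,d\tau<\infty$ for $a<1$. With or without the integration by parts, this gives only $P_\pm U_0\in H^{s-1+\dl}$ with $\dl<1$, which is weaker than the $H^s$ regularity you already have. So nothing contradicts the Step~1 data. Without using resonance structure, $\dt$ costs exactly what $\lambda_k^{-1}$ gains. The real obstacle is therefore Step~3, not Step~2.

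\textbf{The missing idea is to differentiate the system before gauging, as the paper does.} Set $\uu=\dx u$, $\fv=\dx v$, $\Ld=-\frac i2\dx^{-1}(uv)$, $U=e^{-\Ld}\uu$ and $V=e^{\Ld}\fv$. The remainders $G_1,G_2$ in \eqref{d:G} are then products of $u,v,\uu,\fv,\dt\Ld,\dx\Ld,\dx^2\Ld,e^{\pm\Ld},U,V$. All of these lie in the algebra $H^{s-1}$, so the forcing has the \emph{same} regularity as the unknown (Lemma~\ref{lem:funda}).

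Now apply \eqref{Wk0} with Minkowski and $\sup_{k<0}\jb{k}^{\dl}e^{(\Im M_0)k\tau}\les\tau^{-\dl}$. This gives $P_-U(0)\in H^{s-1+\dl}$ for every $\dl\in(0,1)$ directly, with no integration by parts. Lemma~\ref{lem:bili3}, applied to $\dx u_0=e^{\Ld(0)}U(0)$ split into its $P_-$, $P_0$ and $P_+$ parts of $U(0)$, turns this into $P_-u_0\in H^{s+\dl}$. That is Theorem~\ref{thm:NE}.

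With this replacement your Step~1 works essentially as in Proposition~\ref{prop:app}, with one caveat. You must make sure the added tail does not cancel an already non-$H^{s+\frac12}$ part of $P_\pm u_0$. Since the tails differ by trigonometric polynomials, this can happen for at most one value of $\eps_n$ on each half-line; alternatively, split into cases as the paper does.
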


The completely integrable systems like \eqref{CLL} have useful properties in general,
for example, the existence of infinitely many conserved quantities. 
Nevertheless, 
Theorem \ref{thm:IP} implies that, 
within the framework of Sobolev spaces, 
\eqref{CLL} is not well-posed.

On the other hand, 
\eqref{dNLS} is well-posed on $H^s(\T)$. 
Therefore, 
the Cauchy problem \eqref{CLL} may be well-posed 
when additional conditions are imposed on the space $\H^s(\T)$.
This is actually true, which is the next main result.

\begin{theorem}
\label{thm:WP}
Let $s > \frac 32$.
The Cauchy problem \eqref{CLL} is well-posed in 
\begin{equation}
\H_0^s (\T) := \{ (f,g) \in \H^s(\T) \mid \Im M(f,g) = 0 \},
\label{subset} 
\end{equation}
where $M$ is defined in \eqref{conser}.
More precisely, the followings hold:
\begin{enumerate}
\item For any $(u_0, v_0) \in \H_0^s(\T)$, 
there exists $T>0$ and a unique solution $(u,v) \in C([-T,T]; \H_0^s(\T))$ to \eqref{CLL}.
\item If $\{ (u_{0,n}, v_{0,n}) \}_{n \in \N} \subset \H_0^s(\T)$ converges to
$(u_0, v_0)$ in $\H_0^s(\T)$,
then we the corresponding solution $(u_n, v_n)$ 
with the initial data $(u_{0,n}, v_{0,n})$ converges to $(u,v)$ in $C([-T,T]; \H_0^s(\T))$. 
\end{enumerate}
\end{theorem}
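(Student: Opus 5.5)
The plan is to remove the derivative loss with a gauge transformation, which works exactly when $\Im M=0$, and then run a standard energy method with a Bona--Smith argument.

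\textbf{Source of the loss.} Write $uv=a+ib$ with $a,b$ real. The $u$-equation reads
\[
\dt u=-i\dx^2u+(a+ib)\dx u .
\]
In an $H^k$ energy estimate for $u$:
\begin{itemize}
\item the transport part $a\dx u$ can be integrated by parts and costs nothing;
\item the part $ib\dx u$ contributes $-\int b\,\Im(\dx^{k+1}u\,\cj{\dx^k u})\,dx$, which cannot be integrated by parts and loses one derivative.
\end{itemize}
The constant (mean) part $b_0=\frac{1}{2\pi}\Im M(u,v)$ of $b$ acts on Fourier modes as multiplication by $-b_0\xi$. This is backward parabolic on a half line of frequencies and cannot be gauged away; this is presumably the mechanism behind Theorem \ref{thm:IP}. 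On $\H^s_0(\T)$ we have $b_0=0$, and $b_0$ stays zero in time because $M$ is conserved. So only the mean-zero part $b-b_0=b$ has to be removed.

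\textbf{Gauge.} Set $\partial_x^{-1}$ to be the mean-zero antiderivative and $\Phi:=\partial_x^{-1}\Im(uv)$, which is well defined because $\Im(uv)$ has mean zero. Define
\[
w:=e^{\alpha\Phi}u,\qquad z:=e^{\beta\Phi}v,
\]
with real constants $\alpha,\beta$. Since $\dx(e^{\alpha\Phi}u)=e^{\alpha\Phi}(\dx u+\alpha b u)$, the dispersion term $-i\dx^2 u$ generates a term $\mp 2i\alpha b\,\dx w$. I choose $\alpha$ so that this cancels $ib\,\dx u$ in the $u$-equation, and $\beta$ (of opposite sign, since $v$ has the opposite dispersion) to do the same in the $v$-equation.

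The remaining terms involve $\dt\Phi$ and $\dx b$. I would compute $\dt(uv)$ from \eqref{CLL}. Its $\dx^2$ terms combine to $\dx(\dx u\,v-u\,\dx v)$ up to sign, so $\dt\Phi$ loses only one derivative relative to $u,v$, and the first-order terms in $w,z$ then have real coefficients plus lower-order pieces. I also need the inverse map $(w,z)\mapsto(u,v)$, at least near a given datum. It should be obtainable from $uv=wz\,e^{-(\alpha+\beta)\Phi}$, an ODE for $\Phi$, or by applying the implicit function theorem in $H^s$.

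\textbf{Energy method.}
\begin{enumerate}
\item Derive a priori bounds $\frac{d}{dt}\|(w,z)\|_{H^s}^2\le C(\|(u,v)\|_{H^s})\,\|(w,z)\|_{H^s}^2$ using the Kato--Ponce commutator estimates. Here $s>\frac32$ ensures $\dx u\in L^\infty$.
\item Construct solutions via parabolic regularization $\dt u=-\eps\dx^4 u+\dots$ (or Galerkin), using the uniform bounds.
\item Prove uniqueness by an $L^2$ estimate for the difference of two gauged solutions. The gauges differ, so I would compare them through $\Phi_1-\Phi_2$, which is bounded by $\|(u_1-u_2)v_1\|_{L^2}$ and similar terms.
\item Prove continuous dependence in $C([-T,T];\H^s_0)$ by the Bona--Smith argument: frequency-truncated data, difference bounds in $H^{s-1}$, and the higher-norm bounds.
\end{enumerate}
The solution remains in $\H_0^s$ by conservation of $M$, which I verify for $H^s$ solutions by direct differentiation.

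\textbf{Main obstacle.} The hardest step will be the algebra of the gauged system. I need to check that after the gauge every top-order term $\dx(\cdot)$ has a real (transport-type) coefficient, including the contributions coming from $\dt\Phi$ and the cross terms between the $u$ and $v$ equations. I also need to control the differences of gauges in the uniqueness and continuity arguments. I would first work out the gauged equation for smooth solutions to identify $\alpha,\beta$, and then confirm the cancellation symbolically before doing the estimates.
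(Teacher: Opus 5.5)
Your diagnosis of the obstruction ($\Im M$ acting as a backward-parabolic term) is right, but the gauge is applied one derivative too low, and the energy inequality in your step 1 fails. The by-products $\dt\Phi$ and $\dx b$ contain $\dx u$ and $\dx v$. They multiply the \emph{undifferentiated} unknown, so a term that ``loses only one derivative relative to $u,v$'' is a full top-order term in the coupled system.

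The cancellation forces $\alpha=-\frac12$, $\beta=\frac12$; incidentally $wz=uv$, so the inversion is explicit with $\Phi=\dx^{-1}\Im(wz)$. Using $\dt(uv)=\dx\bigl[-i(v\dx u-u\dx v)+\frac12(uv)^2\bigr]$ one finds
\begin{align*}
(\dt+i\dx^2)w&=a\,\dx w+\frac12\,\cj{v\,\dx u}\,w-\frac12\,w^2\,\dx z+(\text{zero order}),\\
(\dt-i\dx^2)z&=a\,\dx z+\frac12\,\cj{u\,\dx v}\,z-\frac12\,z^2\,\dx w+(\text{zero order}).
\end{align*}
The conjugate terms are harmless, since $\cj{\dx^{s+1}w}\,\cj{\dx^s w}=\frac12\dx\bigl(\cj{\dx^s w}\bigr)^2$. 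The cross terms are not. After one integration by parts, their top-order contribution to $\frac12\frac{d}{dt}\bigl(\|\dx^s w\|_{L^2}^2+\|\dx^s z\|_{L^2}^2\bigr)$ is
\[
\frac12\,\Re\int_\T\bigl(\cj w^{\,2}-z^2\bigr)\,\dx^{s+1}w\;\cj{\dx^s z}\,dx .
\]
This grows like $N$ for $w,z$ bounded in $H^s$ whose top-order parts sit at frequency $N$, unless $z^2=\cj w^{\,2}$ (essentially the DNLS reduction $v=\cj u$). So your claim that every first-order term ends up with a real coefficient fails, and step 1 fails for general data in $\H^s_0(\T)$. The same term also breaks the $L^2$ difference estimates you need for uniqueness and for Bona--Smith.

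The paper avoids this by gauging the derivatives: $U=e^{-\Ld}\dx u$, $V=e^{\Ld}\dx v$, with the complex phase $\Ld=-\frac i2\dx^{-1}(uv)$.
\begin{itemize}
\item At this level the cross term $iu\,\dx v\,\dx u$ of the $\dx u$-equation is already zero order.
\item $\dt\Ld$ and $\dx^2\Ld$ are algebraic in $u$, $v$, $\dx u=e^{\Ld}U$, $\dx v=e^{-\Ld}V$, hence zero order in $(U,V)$ (Lemma \ref{lem:funda}).
\item The complex gauge removes the whole coefficient $uv$ except its mean. This leaves $iM\dx U$, whose non-transport part $-(\Im M)\dx U$ vanishes on $\H^s_0(\T)$.
\end{itemize}
The augmented system for $(u,v,U,V)$ is then semilinear in $H^{s-1}$ and is solved by a plain contraction. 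The $H^s$ regularity of $(u,v)$ is recovered from $\dx u=e^{\Ld}U$, and no energy estimate or Bona--Smith step is needed.

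To rescue your route you need one of two ideas:
\begin{itemize}
\item apply your real gauge to $(\dx u,\dx v)$ instead of $(u,v)$, so the by-products become zero order; or
\item add a normal-form correction exploiting the opposite dispersions of $w$ and $z$, e.g.\ $\tilde w=w-\frac i4T_{w^2}\dx^{-1}z$ and $\tilde z=z+\frac i4T_{z^2}\dx^{-1}w$, with $T$ a paraproduct taking the coefficient at low frequency. This cancels the cross terms at top order because $(\dt+i\dx^2)\dx^{-1}z=2i\dx z+\dots$.
\end{itemize}
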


Here, we give some examples of subsets of $\H_0^s(\T)$:
\begin{enumerate}
\item 
We set
\[
\H_{\text{conj}}^s(\T) := \{(f,g) \in \H^s(\T) \mid g = \cj f\}.
\]
Then, for any $(u,v) \in \H_{\text{conj}}^s(\T)$, we have
\[
\Im M(f,g) = \Im \int_\T |f(x)|^2 dx = 0.
\]
Hence, $\H_{\text{conj}}^s(\T) \subset \H_0^s(\T)$. 
Especially, DNLS \eqref{dNLS} is well-posed in $H^s(\T)$ for $s > \frac 32$. 
\item
Define
\[
\begin{aligned}
H_+^s(\T) &:= \{ f \in H^s(\T) \mid \ft f (n) = 0 \quad \text {for $n \le 0$} \},
\\
\H^s_+(\T) &:= H_+^s(\T) \times H_+^s(\T).
\end{aligned}
\]
Then, we obtain
\[
\begin{aligned}
\Im M(f,g) 
&= \Im \int_\T \Big (\sum_{n \in \N} \ft f(n) e^{inx} \Big) 
\Big( \sum_{m \in \N} \ft g(m) e^{imx} \Big) dx
\\
&=
\Im \int_\T \sum_{k=1}^\infty \Big( \sum_{j=1}^k \ft f(j) \ft g(k-j) e^{ikx} \Big) dx = 0. 
\end{aligned}
\]
Therefore, $\H^s_+(\T) \subset \H_0^s(\T)$.
In particular, $(f(\cdot), \cj {f(-\cdot)} ) \in \H^s_+(\T)$ provided with $f \in H_+^s(\T)$.
Hence, the nonlocal DNLS \eqref{NdNLS} is well-posed in $H_+^s(\T)$ for $s > \frac 32$.
\end{enumerate}

\begin{remark}
\rm
The regularity condition in Theorems \ref{thm:IP} and \ref{thm:WP} is not optimal.
For example, see \cite{He06}. 
However, 
since our main goal is to study the continuity of the flow map of \eqref{CLL} in $\H^s(\T)$,
we do not pursue the optimal regularity in this paper.
\end{remark}

To identify the most problematic nonlinear term in \eqref{CLL}, 
we consider the system of $\uu := \dx u$, $\fv := \dx v$. 
The equation for $(\uu, \fv)$ is given by
\[
\begin{aligned}
i \dt \uu - \dx^2 \uu &= i u v \dx \uu + i \uu v \uu + i u \fv \uu,
\\
i \dt \fv + \dx^2 \fv &= i u v \dx \fv + i \uu v \fv + i u \fv^2.  
\end{aligned}
\]  
Since the second and third terms on the right-hand side do not contain $\dx \uu$ and $\dx \fv$, 
they are harmless. 
On the other hand, 
the first term cannot be handled directly because a derivative loss exists.
Thus, we apply a gauge transformation to cancel out this term.
For the gauge transformation, see \cite{HaOz92, He06}.
However, since the gauge transformation cannot remove the constant term, 
it is necessary to impose a new condition that this term vanishes.
Specifically, by setting 
\[
U:= \exp \Big( - \frac i2 \dx^{-1} (uv) \Big) \uu, 
\quad 
V:= \exp \Big( \frac i2 \dx^{-1} (uv) \Big) \fv, 
\]
$(U, V)$ satisfies 
\begin{equation}
\begin{aligned}
i \dt U - \dx^2 U &= i \int_\T u v\; dx \cdot \dx U + \text{(other terms)},
\\
i \dt V + \dx^2 V &= i \int_\T u v \; dx \cdot \dx V + \text{(other terms)}.
\end{aligned}
\label{eqU}
\end{equation}
From the conservation of $\int_\T u v dx = M(u,v)$,
we have $\int_\T u v dx = \int_\T u_0 v_0 dx$.
Then,
if $\Im \int_\T u_0 v_0 \; dx \neq 0$,
the term $\Im \int_\T u_0 v_0 \; dx \cdot \dx U$ does not be canceled out from \eqref{eqU}.
Thus, 
the Cauchy–Riemann-type elliptic operator
$i \dt + a \dx$ ($a \in \R$) formally appears in \eqref{eqU}.
This suggests that \eqref{CLL} is ill-posed in $\H^s(\T)$.

For the proof of Theorem \ref{thm:IP},
we derive a sufficient condition on the initial data for the equation to have no solution 
by exploiting the smoothing effect of the Cauchy-Riemann-type elliptic operator.
See Theorem \ref{thm:NE} below.
Then, by constructing the initial data which the corresponding solution does not exist,
we get the non-existence result for \eqref{CLL}.
This approach is used in \cite{KiTs18, KoOk25b}.
Moreover, in this paper, we prove that for any $(u_0, v_0) \in \H^s(\T)$,   
there exists the sequence $\{ (u_{0,n}, v_{0,n}) \}_{n \in \N} \subset \H^s(\T)$
which converges to $(u_0, v_0)$ in $\H^s(\T)$ such that 
no corresponding solution to \eqref{CLL} exists.
See Proposition \ref{prop:app}.
As a result, we prove nowhere continuity of the flow map of \eqref{CLL}.

If we consider the equation \eqref{eqU} in $\H_0^s(\T)$ defined in \eqref{subset},
the first term on the right-hand side of \eqref{eqU} vanishes.
Moreover, one can see that ``(other terms)'' in \eqref{eqU} contains neither
$\dx \uu$ nor $\dx \fv$. See Lemma \ref{lem:funda} below. 
Therefore, 
if the regularity is sufficiently large,
we can apply the contraction mapping argument to \eqref{eqU}
and get a solution $(U, V) \in \H^{s-1}(\T)$ to \eqref{eqU} with the initial data 
$(U(0), V(0)) \in \H^{s-1}(\T)$.   
Then, we get a solution $(u,v) \in \H_0^s(\T)$ to \eqref{CLL} with
the initial data $(u_0, v_0) \in \H_0^s(\T)$.
Thus, we get Theorem \ref{thm:WP}.

%
\begin{remark}
\label{rem:gauge}
\rm
The equation \eqref{dNLS2} was obtained as the reduction $r = \cj q$ 
of the following completely integrable system:
\begin{equation}
\left\{
\begin{aligned}
&i \dt q - \dx^2 q - i \dx (q^2 r) = 0,
\\
&i \dt r + \dx^2 r - i \dx (q r^2) = 0.
\end{aligned}
\right.
\label{KN}
\end{equation}
Note that $M(q,r)$ defined in \eqref{conser} is a conserved quantity for \eqref{KN}.
The equations \eqref{CLL} and \eqref{KN} are known to be gauge equivalent.
More precisely, in the real line setting, $u$ and $v$ defined as
\begin{equation}
\begin{aligned}
u(t,x) &:= \exp \Big( \frac i2 \int_{-\infty}^x q(t,x) r(t,x) dx \Big) q(t,x),
\\
v(t,x) &:= \exp \Big(- \frac i2 \int_{-\infty}^x q(t,x) r(t,x) dx \Big) r(t,x),
\end{aligned}
\label{gaugeR}
\end{equation}
satisfy \eqref{CLL}. 
On the other hand, in the periodic setting, 
the function $\wt u$ and $\wt v$ obtained by replacing $\int_{-\infty}^\cdot$ in \eqref{gaugeR}
with $\dx^{-1}$ satisfies the corrected Chen-Lee-Liu type system: 
\begin{equation*}
\left\{
\begin{aligned}
&i \dt \wt u - \dx^2 \wt u = 2 i \Ft [\wt u \wt v](0) \dx \wt u 
+ \nz (\wt u \wt v) \dx \wt u + Q_1(\wt u , \wt v), 
\\
&i \dt \wt v + \dx^2 \wt v = 2 i \Ft [\wt u \wt v](0) \dx \wt v 
+ \nz (\wt u \wt v) \dx \wt v + Q_2(\wt u , \wt v),
\end{aligned}
\right.
\end{equation*}
where $Q_1$, $Q_2$ are the nonlinearities which do not contain $\dx \wt u$ and $\dx \wt v$.
Moreover, $M(\wt u, \wt v)$ is conserved since $q r = \wt u \wt v$.
Therefore, the method employed in the present paper also yields the same result for \eqref{KN}.
\end{remark}

\begin{remark}
\rm
In some cases, 
the existence of solutions can be proved even in function spaces 
that are not subsets of $\H_0^s(\T)$.
For example, 
the system \eqref{CLL} has an analytic solution with the analytic initial data $(u_0, v_0)$ 
even if $\Im M (u_0, v_0) \neq 0$. 
See Section 4 in \cite{KiTs18}. 
\end{remark}

This paper is organized as follows.
In Section \ref{SEC:IP}, 
we prove Theorem \ref{thm:IP} by using the gauge transformation and 
approximation argument. 
In Section \ref{SEC:WP}, we prove Theorem \ref{thm:WP} by the contraction mapping argument.

We conclude this section with notation.
We write the set of integers as $\Z$. 
Moreover, we denote the set of positive integers by $\N$.
We use $A \les B$ to denote $A \le C B$ with a constant $C>0$.

When $f \in L^1(\T)$, we set
\begin{align*}
\int_\T f(x) dx
:=
\frac 1{2\pi} \int_{-\pi}^\pi f(x) dx,
\quad
\ft f(k)
:= \int_\T f(x) e^{-ikx} dx
\end{align*}
for $k \in \Z$.
We also denote by $\ft f(k)$ or $\Ft [f](k)$ the Fourier coefficient of a distribution $f$ on $\T$.

We denote $H^s(\T)$ by the $L^2$-based Sobolev space on $\T$
equipped with the norm
\[
\| f \|_{H^s} :=
\Big( \sum_{k \in \Z} \jb{k}^{2s} |\ft f(k)|^2 \Big)^{\frac 12},
\]
where $\jb {k} = \sqrt{1 + |k|^2}$.
We also denote $\l^2$ by the space of square-summable sequences
equipped with the norm
\[
\| a \|_{\l^2} :=
\Big( \sum_{k \in \Z}|a(k)|^2 \Big)^{\frac 12}.
\]

For a distribution $f$ on $\T$,
we define
\begin{align*}
P_0 f
&:=
\ft f(0),
\qquad
P_{\neq 0} f
:=
f - P_0 f,
\\
\dx^{-1} f (x)
&:=
\sum_{k \in \Z \setminus \{0\}} \frac1{ik} \ft f(k) 
e^{ikx}.
\end{align*}
Note that $\dx (\dx^{-1} f) = \dx^{-1} (\dx f) = P_{\neq 0} f$.
Moreover, we also define the operators
\begin{align*}
P_+ f (x) := \sum_{k>0} \ft f(k) e^{ikx},
\quad
P_- f (x) := \sum_{k<0} \ft f(k) e^{ikx}.
\end{align*}

For $T>0$ and a Banach space $X$,
we denote the space of essentially bounded $X$-valued functions by $L^\infty([0,T]; X)$ equipped with the norm
\[
\| u \|_{L_T^\infty X}
:=
\esssup_{t \in [0,T]} 
\| u (t) \|_{X}.
\]


\section{The proof of theorem \ref{thm:IP}}
\label{SEC:IP}
In this section, we prove Theorem \ref{thm:IP}.
The following theorem is the main result in this section.

\begin{theorem}
\label{thm:NE}
Let $s> \frac 32$ and $T>0$.
Assume that
$u \in C([0,T]; H^s(\T))$ is a solution to \eqref{CLL}.
If the initial data $(u_0, v_0) \in \H^s(\T)$ satisfies 
$M(u_0, v_0) > 0$ (resp.\:$< 0$),
we have
\[
(P_- u_0, P_- v_0) \in \H^{s+\dl}(\T), \quad (resp.\: (P_+ u_0, P_+ v_0) \in \H^{s+\dl}(\T))
\]
for any  $\dl \in (0, 1)$.
The same conclusion holds for 
a solution 
\[
u \in C([-T,0]; H^s(\T))
\] by interchanging $P_-$ and $P_+$.
\end{theorem}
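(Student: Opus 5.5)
Throughout I read the hypothesis as $\Im M(u_0,v_0)>0$ (resp. $<0$), since $M$ is complex-valued and only its imaginary part produces the elliptic term in \eqref{eqU}. The plan is to pass to the gauged variables
\[
U:=\exp\Big(-\tfrac i2\dx^{-1}(uv)\Big)\dx u,\qquad V:=\exp\Big(\tfrac i2\dx^{-1}(uv)\Big)\dx v,
\]
and to read \eqref{eqU} backwards in time on the Fourier side.

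\textbf{Step 1: the gauged equation.} First I verify that $(U,V)\in C([0,T];\H^{s-1}(\T))$ satisfies, in the distributional sense,
\[
i\dt U-\dx^2U=i\,m\,\dx U+F,\qquad i\dt V+\dx^2V=i\,m\,\dx V+G,\qquad m:=M(u_0,v_0)=a+ib.
\]
Here $F,G$ contain no derivative of $U,V$. For the time derivative of the phase I use the local conservation law $\dt(uv)=\dx(\cdots)$ coming from \eqref{CLL}, so that $\dt\dx^{-1}(uv)$ is expressed through $u,v,\dx u,\dx v$ only. Since $s-1>\frac12$, $H^{s-1}$ is an algebra, and $u,v\in H^s$, $\dx^{-1}(uv)\in H^{s+1}$, I obtain $F,G\in L^\infty([0,T];H^{s-1}(\T))$. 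The constancy of $m$ follows from the conservation of $M$, which I check directly for solutions in the sense of Definition \ref{def:sol} by testing with $\chi$ depending only on $t$.

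\textbf{Step 2: smoothing for the negative frequencies.} Take $b>0$ and consider $U$ (the case of $V$ is identical). On the Fourier side, the identity $\dt\ft U(t,k)=i(k^2+ak)\ft U(t,k)-bk\,\ft U(t,k)-i\ft F(t,k)$ gives, for $k<0$,
\[
\ft U(0,k)=e^{-i(k^2+ak)T}e^{-b|k|T}\ft U(T,k)+i\int_0^T e^{-i(k^2+ak)t'}e^{-b|k|t'}\ft F(t',k)\,dt'.
\]
The first term is rapidly decaying in $k$, so it lies in every $H^\sigma$. For the second term, Hölder's inequality in $t'$ with exponents $p\in(1,2]$ and $p'$ gives
\[
\Big|\int_0^T e^{-b|k|t'}\ft F(t',k)\,dt'\Big|\les (b|k|)^{-1/p'}\|\ft F(\cdot,k)\|_{L^p_T}.
\]
Minkowski's inequality (valid since $p\le2$) then yields
\[
\|\jb{k}^{s-1+1/p'}\|\ft F(\cdot,k)\|_{L^p_T}\|_{\l^2(k<0)}\le\|F\|_{L^p_TH^{s-1}}<\infty .
\]
Choosing $p'$ large gives $P_-U(0)\in H^{s-1+\dl}$ for every $\dl\in(0,1)$, which is exactly the range in the statement. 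In the same way $P_-V(0)\in H^{s-1+\dl}$.

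\textbf{Step 3: undoing the gauge.} I have $\dx u_0=e^{\frac i2\phi}U(0)$ with $\phi=\dx^{-1}(u_0v_0)\in H^{s+1}$, and I split
\[
P_-\big(e^{\frac i2\phi}U(0)\big)=P_-\big(e^{\frac i2\phi}P_-U(0)\big)+P_-\big(e^{\frac i2\phi}(P_0+P_+)U(0)\big).
\]
The first term lies in $H^{s-1+\dl}$ by the algebra property, since $e^{\frac i2\phi}-1\in H^{s+1}$. In the second term, an output at negative frequency from an input at nonnegative frequency forces the frequency of $e^{\frac i2\phi}$ to dominate the output frequency. A standard estimate then puts this term in $H^{s}$. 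Hence $P_-\dx u_0\in H^{s-1+\dl}$, i.e.\ $P_-u_0\in H^{s+\dl}$, and likewise $P_-v_0\in H^{s+\dl}$.

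\textbf{The remaining cases.} For $b<0$ the same computation applies to $k>0$ and gives the statement for $P_+$. For solutions on $[-T,0]$, I integrate over $[-T,0]$ instead, which reverses the sign of the damping and interchanges $P_\pm$.

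\textbf{Main obstacle.} I expect Step 1 to be the hardest part: rigorously deriving the gauged equation for merely distributional solutions in $C([0,T];\H^s)$, in particular justifying the chain rule for the time-dependent phase and checking that no $\dx U$ survives in $F$ apart from the constant-coefficient term $i m\,\dx U$. My first approach is to mollify in $x$, compute with smooth objects, and pass to the limit using $s>\frac32$. The exact form of $F$ should be what Lemma \ref{lem:funda} provides.
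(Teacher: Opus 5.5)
Your overall structure is the paper's: gauge transform, backward Duhamel formula on $k<0$, then undoing the gauge with the frequency-interaction bound of Lemma~\ref{lem:bili3}. However, Step~2 does not give the claimed range of $\dl$.

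H\"older in $t'$ with exponents $(p,p')$ gains exactly $\jb{k}^{1/p'}$, since $\|e^{-b|k|\cdot}\|_{L^{p'}(0,T)}\sim(b|k|)^{-1/p'}$. Your Minkowski step needs $p\le 2$, i.e.\ $1/p'\le\frac12$. Taking $p'$ large makes the gain \emph{smaller}, not larger. A gain close to $1$ would require $p'\to1$, i.e.\ $p\to\infty$, and for $p>2$ the inequality $\big\|\|f(\cdot,k)\|_{L^p_T}\big\|_{\ell^2_k}\le\big\|\|f(t,\cdot)\|_{\ell^2_k}\big\|_{L^p_T}$ fails. So as written you only get $P_-U(0)\in H^{s-1+\dl}$, hence $P_-u_0\in H^{s+\dl}$, for $\dl\le\frac12$, not for all $\dl\in(0,1)$.

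The fix is to apply Minkowski in the other order, as the paper does in \eqref{noli}. Put the weighted $\ell^2(k<0)$ norm inside the $t'$-integral and use $\sup_{k<0}\jb{k}^{\dl}e^{-b|k|t'}\les 1+(bt')^{-\dl}$. This gives
\[
\Big\|\jb{k}^{s-1+\dl}\int_0^T e^{-i(k^2+ak)t'}e^{-b|k|t'}\ft F(t',k)\,dt'\Big\|_{\ell^2(k<0)}\les \int_0^T\big(1+(bt')^{-\dl}\big)\,dt'\;\|F\|_{L^\infty_TH^{s-1}},
\]
which is finite exactly for $\dl<1$ and uses only $F\in L^\infty_TH^{s-1}$.

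A smaller point: the sign of your gauge is reversed. With $U=e^{-\frac i2\dx^{-1}(uv)}\dx u$, the coefficient of $\dx\uu$ after conjugation is $iuv+iP_{\neq0}(uv)$, so the derivative loss is doubled rather than removed. The correct choice is \eqref{gauge}, i.e.\ $U=e^{\frac i2\dx^{-1}(uv)}\dx u$ and $V=e^{-\frac i2\dx^{-1}(uv)}\dx v$; the display in the introduction has the same slip.

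With these two corrections, the rest of your argument is sound and matches the paper. That includes computing $\dt\dx^{-1}(uv)$ via $\dt(uv)=\dx\big(-i(v\dx u-u\dx v)+\tfrac12(uv)^2\big)$, keeping $\Re M_0$ as a unimodular phase instead of translating by $\TT$, Step~3, and the remaining sign and time-direction cases.
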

In the rest of this section, 
we only consider a solution forward in time.
Let $(u, v) \in C([0,T]; \H^s(\T))$ be the solution to \eqref{CLL}.
For $s > \frac 32$, $H^{s-1}(\T)$ is Banach algebra.
Hence, we have
\[
\begin{aligned}
&\| u v \dx u \|_{L_T^\infty H^{s-1}} + \| u v \dx v \|_{L_T^\infty H^{s-1}} 
\\
&\les \|u\|_{L_T^\infty H^{s-1}} \|v\|_{L_T^\infty H^{s-1}}
(\|u\|_{L_T^\infty H^s} + \|v\|_{L_T^\infty H^s} ). 
\end{aligned}
\]
Especially, $u v \dx u$ and $u v \dx v$ belong to $C([0,T];H^{s-1}(\T))$ for $s > \frac 32$.
This implies that $(u, v) \in C^1([0,T]; \H^{s-2}(\T))$ 
and
\begin{equation*}
\left\{
\begin{aligned}
&i \dt u - \dx^2 u - i u v \dx u = 0,
\\
&i \dt v + \dx^2 v - i u v \dx v = 0,
\end{aligned}
\right.
\end{equation*}
holds in $H^{s-2}(\T)$ for $t \in [0, T]$. 
Therefore, $M(u,v)$ is conserved.
Throughout this section,
we write $M_0 = M(u_0, v_0)$ for short.
 
Set
\begin{equation*}
\begin{aligned}
\uu := \dx u, \quad \fv := \dx v.  
\end{aligned}
\end{equation*}
Then, $(\uu, \fv)$ satisfies
\begin{equation}
\left\{
\begin{aligned}
&i \dt \uu - \dx^2 \uu = i u v \dx \uu + F_1,
\\
&i \dt \fv + \dx^2 \fv = i u v \dx \fv + F_2,
\end{aligned}
\right.
\label{CLLdx}
\end{equation}
where
\begin{equation}
\begin{aligned}
F_1 &= F_1 (u, v, \uu, \fv)
=
i \uu^2 v + i u \fv \uu,
\\
F_2 &= F_2 (u, v, \uu, \fv)
=
i \uu v \fv + i u \fv^2. 
\end{aligned}
\label{d:F} 
\end{equation}
To eliminate the problematic part in the first term on the right-hand side of \eqref{CLLdx},
we apply a gauge transformation.
Set
\begin{equation}
\Ld := - \frac i2  \dx^{-1} (u v),
\quad U := e^{-\Ld} \uu, \quad V:= e^\Ld \fv.  
\label{gauge}
\end{equation}
A direct calculation yields that
\[
\begin{aligned}
\dt U &= e^{-\Ld} ( - (\dt \Ld) \uu + \dt \uu ),
\\
\dx U &= e^{-\Ld} ( - (\dx \Ld) \uu + \dx \uu ),
\\
\dx^2 U &= e^{-\Ld} ( (\dx \Ld)^2 \uu - (\dx^2 \Ld) \uu - 2 (\dx \Ld) \dx \uu + \dx^2 \uu).
\end{aligned}
\]
Moreover, we have
\[
\begin{aligned}
\dt V &= e^\Ld ( (\dt \Ld) \fv + \dt \fv ),
\\
\dx V &= e^\Ld ( (\dx \Ld) \fv + \dx \fv ),
\\
\dx^2 V &= e^\Ld ((\dx \Ld)^2 \fv + (\dx^2 \Ld) \fv + 2 (\dx \Ld) \dx \fv + \dx^2 \fv).
\end{aligned}
\]
From \eqref{CLLdx}, \eqref{gauge}, and the conservation law, we obtain
\begin{equation*}
\left\{
\begin{aligned}
&i \dt U - \dx^2 U  
= i M_0 \dx U 
+ G_1,
\\
&i \dt V + \dx^2 V 
= i M_0 \dx V 
+ G_2,
\end{aligned}
\right.
\end{equation*}
where 
\begin{equation}
\begin{aligned}
G_1 &:= e^{-\Ld} F_1 - (i (\dt \Ld) + \Ft[u v](0) (\dx \Ld) - (\dx^2 \Ld) + (\dx \Ld)^2) U,
\\
G_2 &:= e^\Ld F_2 + (i (\dt \Ld) - \Ft[u v](0) (\dx \Ld) + (\dx^2 \Ld) + (\dx \Ld)^2) V.
\end{aligned}
\label{d:G}
\end{equation}
Moreover, we define the operator $\TT: C([0,T]; L^2(\T)) \to C([0,T]; L^2(\T))$ as
\[
\begin{aligned}
\TT f = f(t, x- (\Re M_0) t).
\end{aligned}
\]
Then, $(\TT U, \TT V)$ satisfy
\begin{equation*}
\left\{
\begin{aligned}
i \dt \TT U - \dx^2 \TT U 
&= 
- (\Im M_0) \dx \TT U + \TT G_1,
\\
i \dt \TT V + \dx^2 \TT V 
&= 
- (\Im M_0) \dx \TT V + \TT G_2,
\end{aligned}
\right.
\end{equation*}
where $G_1$, $G_2$ are defined in \eqref{d:G}.

In the following, let $\TT U$ and $\TT V$ be simply denoted as $U$ and $V$. 
Namely, we assume $(U, V)$ satisfy
\begin{equation}
\left\{
\begin{aligned}
i \dt U - \dx^2 U 
&= 
- (\Im M_0) \dx U + G_1,
\\
i \dt V + \dx^2  V 
&= 
- (\Im M_0) \dx V + G_2.
\end{aligned}
\right.
\label{CLLg4}
\end{equation}

We mention the fundamental estimate.
\begin{lemma}
\label{lem:funda}
Let $T > 0$, $s > \frac 32$. 
For $(u, v) \in C([0,T]; \H^s(\T))$,
there exists $C_s:= C(s, \|u\|_{L_T^\infty H^{s-1}}, \|v\|_{L_T^\infty H^{s-1}}, 
\|U\|_{L_T^\infty H^{s-1}}, \|V\|_{L_T^\infty H^{s-1}}) > 0$ such that
\begin{align*}
\|G_j(t)\|_{H^{s-1}} \le C_s
\end{align*}
for $j = 1,2$ and $t \in (0, T)$.
\end{lemma}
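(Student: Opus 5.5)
The plan is to bound each piece of $G_1$ and $G_2$ in \eqref{d:G} separately in $H^{s-1}(\T)$. We use that $H^{s-1}(\T)$ is a Banach algebra for $s>\frac32$, together with $\|\dx^{-1} f\|_{H^{s}} \le \|f\|_{H^{s-1}}$. First recall $\uu = e^{\Ld} U$ and $\fv = e^{-\Ld} V$. We also record that $\Ld = -\frac i2 \dx^{-1}(uv) \in H^{s}(\T)$, with $\|\Ld\|_{H^s}\les \|u\|_{H^{s-1}}\|v\|_{H^{s-1}}$. Hence, by the algebra property and the power series, $\|e^{\pm\Ld}\|_{H^{s-1}} \le \exp(C\|\Ld\|_{H^{s-1}})$, which depends only on $\|u\|_{H^{s-1}}$ and $\|v\|_{H^{s-1}}$. (The translation $\TT$ is an isometry on $H^{s-1}$, so it plays no role.)

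Next we treat $F_1$ and $F_2$. Writing $F_1 = i e^{2\Ld}U^2 v + i u V U$ gives $e^{-\Ld}F_1 = i e^{\Ld} U^2 v + i e^{-\Ld} u U V$. Each factor lies in $H^{s-1}$ with norm controlled by the listed quantities, and $e^\Ld F_2$ is handled the same way. The spatial terms are equally direct: $\dx\Ld = -\frac i2 \nz(uv)$ and $\dx^2\Ld = -\frac i2 \dx(uv) = -\frac i2 (e^{\Ld}U v + u e^{-\Ld}V)$. So $\Ft[uv](0)\dx\Ld$, $(\dx\Ld)^2$ and $\dx^2\Ld$, once multiplied by $U$ or $V$, are bounded in $H^{s-1}$. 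Here we use $|\Ft[uv](0)| \le \|u\|_{L^2}\|v\|_{L^2}$. Importantly, no term requires $\|u\|_{H^s}$; the derivatives of $u$ and $v$ are always rewritten through $U$ and $V$.

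The main step is the time derivative $\dt\Ld = -\frac i2 \dx^{-1}\dt(uv)$. In $H^{s-2}$ we use the equations \eqref{CLL}: $\dt u = -i\dx^2 u + uv\dx u$ and $\dt v = i\dx^2 v + uv\dx v$. Then
\[
\dt(uv) = -i v\dx^2 u + i u\dx^2 v + uv\,\dx(uv) = \dx\bigl(-i v\dx u + i u \dx v\bigr) + \tfrac12 \dx\bigl((uv)^2\bigr),
\]
using $-iv\dx^2u + iu\dx^2 v = \dx(-iv\dx u + iu\dx v)$. Hence
\[
\dt\Ld = -\tfrac12 \nz\bigl(v\uu - u\fv\bigr) - \tfrac i4 \nz\bigl((uv)^2\bigr),
\]
with $v\uu = v e^{\Ld}U$ and $u\fv = u e^{-\Ld}V$ both in $H^{s-1}$. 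This identity holds for each $t\in(0,T)$, because $(u,v)\in C^1([0,T];\H^{s-2})$ and $\dx^{-1}$ commutes with $\dt$. It therefore bounds $\|\dt\Ld\, U\|_{H^{s-1}}$ and $\|\dt\Ld\, V\|_{H^{s-1}}$ by a constant depending only on the allowed norms. Summing all the pieces gives $\|G_j(t)\|_{H^{s-1}} \le C_s$.

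The delicate point is this justification of $\dt\Ld$. A naive estimate of $\dt u$ loses two derivatives, so the cancellation into an exact $x$-derivative, which $\dx^{-1}$ then undoes, is essential.
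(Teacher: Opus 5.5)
Your proposal is correct and follows essentially the same route as the paper: bound $e^{\pm\Ld}$ in $H^{s-1}$ by the power series, recover $\dx u=e^{\Ld}U$ and $\dx v=e^{-\Ld}V$, and then estimate $e^{\mp\Ld}F_j$, $\dx\Ld$, $\dx^2\Ld$ and $\dt\Ld$ term by term using the algebra property of $H^{s-1}(\T)$. Your expression for $\dt\Ld$, based on $-iv\dx^2u+iu\dx^2v=\dx(-iv\dx u+iu\dx v)$, is the correct one (the paper's display \eqref{dtLd} expands $i\dt(u+v)$ rather than $\dt(uv)$), although the resulting bound $\|\dt\Ld\|_{H^{s-1}}\le C_s$ is the same.
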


\begin{proof} 
Since the proofs in the case of $j=1$ and $j=2$ are the same, we only consider $j=1$.
For simplicity, we suppress the time dependence in this proof.

Fix $s > \frac 32$. 
Note that $H^{s-1} (\T)$ is Banach algebra.
From \eqref{gauge}, we obtain 
\begin{equation}
\begin{aligned}
\| e^{\pm \Ld} \|_{H^{s-1}} 
\les \sum_{\l = 0}^\infty \frac 1{\l !} \| \Ld \|_{H^{s-1}}^\l
\le
C_s. 
\label{p:eLd}
\end{aligned}
\end{equation}
Hence, we have
\begin{equation}
\begin{aligned}
\| \dx u \|_{H^{s-1}}
=
\|e^{\Ld} e^{-\Ld} \dx u\|_{H^{s-1}} 
\les
\|e^{\Ld}\|_{H^{s-1}} \|U\|_{H^{s-1}}
\le
C_s.
\end{aligned}
\label{es:dxu}
\end{equation}
Similarly, it holds that
\begin{equation}
\| \dx v \|_{H^{s-1}} \le C_s.
\label{es:dxv}
\end{equation}

A direct calculation with \eqref{gauge} yields that
\begin{equation}
\begin{aligned}
\dx \Ld = - \frac i2 \nz (uv), \quad
\dx^2 \Ld = - \frac i2 \dx (uv).
\end{aligned}
\label{dxLd}
\end{equation}
Moreover, we have
\begin{equation}
\begin{aligned}
\dt \Ld 
&= 
- \frac i2 \dx^{-1} \dt (u v)
\\
&=
- \frac i2 \dx^{-1} \big( \dx^2 u + i u v \dx u - \dx^2 v + i u v \dx v \big)
\\
&= - \frac i2 \big( \dx u - \dx v + i \dx^{-1} (u v (\dx u + \dx v) ) \big).
\end{aligned}
\label{dtLd}
\end{equation}
Since
\begin{equation*}
\begin{aligned}
\| u v \|_{H^{s-1}} 
\les \| u \|_{H^{s-1}} \|v \|_{H^{s-1}} 
\le C_s,
\end{aligned}
\end{equation*}
with \eqref{es:dxu}--\eqref{dtLd}, 
we have
\begin{equation}
\begin{aligned}
\|\dx \Ld\|_{H^{s-1}} 
&\le \|u v\|_{H^{s-1}} 
\le C_s,
\\
\| \dx^2 \Ld \|_{H^{s-1}}
&\le
\| (\dx u) v \|_{H^{s-1}} + \| u \dx v \|_{H^{s-1}} 
\le C_s,
\\
\| \dt \Ld \|_{H^{s-1}}
&\le
\| \dx u \|_{H^{s-1}} + \| \dx v \|_{H^{s-1}} 
\\
&\qquad+  \| \dx^{-1} (u v (\dx u + \dx v)) \|_{H^{s-1}}
\le
C_s.
\end{aligned}
\label{es:Ld}
\end{equation}
It follows from \eqref{p:eLd} and \eqref{d:F} that
\begin{equation}
\begin{aligned}
\|e^{-\Ld} F_1\|_{H^{s-1}}
&\les
\| e^{-\Ld} \|_{H^{s-1}} \|F_1\|_{H^{s-1}} 
\le
C_s. 
\end{aligned}
\label{es:F}
\end{equation}
Combining \eqref{d:G}, \eqref{es:Ld}, and \eqref{es:F}, we get the desired bound.
\end{proof} 

We recall the following estimate.
For the proof, see Lemma 2.3 in \cite{KoOk25b}.
\begin{lemma}
\label{lem:bili3}
Let $s \ge 0$ and $r>\frac 12$.
Then, we have
\[
\| P_+(f P_- g) \|_{H^s} + \| P_-(f P_+ g) \|_{H^s}
\les \| f \|_{H^s} \| g \|_{H^r}
\]
for any $f \in H^s(\T)$ and $g \in H^r(\T)$.
\end{lemma}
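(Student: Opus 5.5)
The idea is to work directly on the Fourier side. In the projected products $P_+(fP_-g)$ and $P_-(fP_+g)$, the output frequency and the frequency of $g$ have opposite signs, so the frequency of $f$ is always at least as large in modulus as the output frequency. The whole weight $\jb{k}^s$ can then be placed on $f$, and $g$ only needs to be controlled in the Wiener algebra $\ell^1$ of Fourier coefficients. That norm is dominated by $\|g\|_{H^r}$ once $r>\frac12$.

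By symmetry (the map $f\mapsto f(-\cdot)$ interchanges $P_+$ and $P_-$ and preserves all $H^\sigma$ norms), it is enough to treat $P_+(fP_-g)$. For $k\in\Z$ we have
\[
\Ft[P_+(fP_-g)](k) = \mathbf 1_{\{k>0\}} \sum_{m<0} \ft f(k-m)\, \ft g(m).
\]
\textbf{Key observation:} for $k>0$ and $m<0$ we get $k-m = k+|m| > k >0$. Hence $\jb{k}\le \jb{k-m}$, and since $s\ge 0$ also $\jb{k}^s\le\jb{k-m}^s$. Therefore
\[
\jb{k}^s\,\big|\Ft[P_+(fP_-g)](k)\big| \le \sum_{m\in\Z} \jb{k-m}^s|\ft f(k-m)|\,|\ft g(m)| = (a*b)(k),
\]
where $a(n):=\jb{n}^s|\ft f(n)|$ and $b(n):=|\ft g(n)|$.

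Taking the $\l^2$ norm in $k$ and applying Young's inequality $\l^2*\l^1\subset\l^2$ gives
\[
\|P_+(fP_-g)\|_{H^s}\le \|a\|_{\l^2}\|b\|_{\l^1} = \|f\|_{H^s}\sum_{m}|\ft g(m)|.
\]
It remains to bound the $\l^1$ norm of $\ft g$ by Cauchy--Schwarz:
\[
\sum_m|\ft g(m)|\le \Big(\sum_m\jb{m}^{-2r}\Big)^{1/2}\|g\|_{H^r}\les\|g\|_{H^r},
\]
which is finite precisely because $2r>1$. This yields the bound for the first term, and the second term follows by the reflection argument above.

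There is no real obstacle. The only point needing care is the sign bookkeeping that gives $|k-m|\ge|k|$; this is exactly where the projections $P_\pm$ are used. The condition $s\ge0$ is what allows the monotonicity $\jb{k}^s\le\jb{k-m}^s$, and $r>\frac12$ is used only for the summability of $\jb{m}^{-2r}$.
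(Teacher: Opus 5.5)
Your argument is correct and complete: since $|k-m|>|k|$ whenever $k$ and $m$ have opposite signs, the hypothesis $s\ge 0$ lets you move the full weight $\langle k\rangle^s$ onto $f$, and then Young's inequality $\ell^2*\ell^1\subset\ell^2$ together with $\|\widehat{g}\|_{\ell^1}\lesssim\|g\|_{H^r}$ for $r>\frac12$ finishes the proof. The paper gives no proof of its own and only cites Lemma 2.3 of \cite{KoOk25b}, and your Fourier-side argument is the standard proof of this estimate.
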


Using Lemmas \ref{lem:funda} and \ref{lem:bili3}, we prove Theorem \ref{thm:NE}.
\begin{proof}[Proof of Theorem \ref{thm:NE}]
We only consider the case $\Im M_0 > 0$, 
the case $\Im M_0 < 0$ follows from a slight modification.

Set
\[
\Ul := e^{it\dx^2} U, \quad \Vl:= e^{-it\dx^2} V.
\]
Note that
\[
\| \Ul(t) \|_{H^s} = \|U(t)\|_{H^s}, \quad \| \Vl(t) \|_{H^s} = \|V(t)\|_{H^s}
\]
for $t \in [0,T]$.
From \eqref{CLLg4}, we have
\begin{equation}
i \dt \ft \Ul(t,k)
=
- i (\Im M_0) k \ft \Ul(t,k) + \ft {G_1}(t,k).
\label{eqUlk}
\end{equation}
Solving \eqref{eqUlk}, we obtain
\begin{equation*}
\begin{aligned}
\ft \Ul(t, k)
&=
e^{-(\Im M_0) k t} \ft \Ul(0, k) 
- i \int_0^t e^{-(\Im M_0) k (t-t')} \ft {G_1}(t',k) dt'
\end{aligned}
\end{equation*}
for $t \in [0, T]$.
Then, it holds that
\begin{equation}
\begin{aligned}
\ft \Ul(0,k)
&=
e^{(\Im M_0) k T} \ft \Ul (T,k) 
+ i
\int_0^T e^{(\Im M_0) k t'} \ft {G_1}(t',k) dt'.
\end{aligned}
\label{Wk0}
\end{equation}

Fix $s > \frac 32$ and $0 < \dl < 1$. 
From Lemma \ref{lem:funda}, 
there exists 
\[
C_s := C (s, \|u\|_{L_T^\infty H^{s-1}}, \|v\|_{L_T^\infty H^{s-1}}, \|U\|_{L_T^\infty H^{s-1}}
, \|V\|_{L_T^\infty H^{s-1}}) > 0
\]
such that
\begin{equation}
\begin{aligned}
&\Big \| \jb{k}^{s-1+\dl} \int_0^T e^{(\Im M_0) k t'} P_- \ft {G_1} (t', k) dt' \Big\|_{\l^2}
\\
&\le \int_0^T \| \jb{k}^\dl e^{(\Im M_0) k t'} \jb{k}^{s-1} P_- \ft {G_1} (t', k)\|_{\l^2} dt'
\\
&\les \int_0^T ((\Im M_0) t')^{-\dl} \|G_1\|_{H^{s-1}} dt'
\\
&\le
\frac {T^{1-\dl}}{(\Im M_0)^\dl (1 - \dl)} C_s.
\end{aligned}
\label{noli}
\end{equation}

Since 
\[
\sup_{k < 0} \; \jb{k}^\dl e^{(\Im M_0) k T} < \infty,  
\]
we obtain
\begin{equation}
\begin{aligned}
&\| \jb{k}^{s-1+\dl} e^{(\Im M_0) k T} P_- \ft \Ul(T,k) \|_{\l^2}
\\
&\les
\| \jb{k}^{s-1} \ft \Ul(T,k) \|_{\l^2}
=
\| \Ul(T) \|_{H^{s-1}}
\le
C_s.
\end{aligned}
\label{lin}
\end{equation}
It follows from \eqref{Wk0}, \eqref{noli}, and \eqref{lin} that
\begin{equation}
\begin{aligned}
\|P_- \Ul(0)\|_{H^{s-1+\dl}}
\le
C_s.
\end{aligned}
\label{P+Ulk}
\end{equation}

By \eqref{gauge}, Lemma \ref{lem:bili3},
and \eqref{P+Ulk}, we obtain
\begin{equation*}
\begin{aligned}
\| P_- u_0 \|_{H^{s+\dl}}
&\les
\| P_- (e^{\Ld} U)(0) \|_{H^{s-1+\dl}}
\\
&\le
\| P_- (e^{\Ld} P_- U)(0) \|_{H^{s-1+\dl}}
+ 
\| P_- (e^{\Ld} P_0 U)(0) \|_{H^{s-1+\dl}}
\\
&\hspace*{80pt}+
\| P_- (e^{\Ld} P_+ U)(0) \|_{H^{s-1+\dl}}
\\
&\le
\| e^{\Ld(0)} \|_{H^{\frac 12+ \eps}} \| P_- \Ul (0)\|_{H^{s-1+\dl}}
+
\| e^{\Ld(0)} \|_{H^s} \| U(0) \|_{H^{\frac 12+\eps}}
\\
&\le
C_s
\end{aligned}
\end{equation*}
for $0 < \eps \ll 1$.
The same calculation in this proof yields that
\[
\| P_- v_0 \|_{H^{s+\dl}} \le C_s.
\]
This shows Theorem \ref{thm:NE}.
\end{proof}

Theorem \ref{thm:IP} can be proved by constructing 
an approximated sequence $\{(u_{0,n}, v_{0,n})\}_{n \in \N} \subset \H^s(\T)$.

\begin{proposition}
\label{prop:app} 
Let $s > \frac 32$. For $0<\dl<1$ and $(u_0, v_0) \in \H^s(\T)$, there exists a sequence
$\{(u_{0,n}, v_{0,n})\}_{n \in \N} \subset \H^s(\T)$ such that the followings hold:
\begin{enumerate}
\item $(u_{0,n}, v_{0,n})$ converges to $(u_0, v_0)$ in $\H^s(\T)$ as $n \to \infty$;
\item $\Im M (u_{0, n}, v_{0, n}) \neq 0$ for any $n \in \N$;
\item $(P_\pm u_{0,n}, P_\pm v_{0,n}) \notin \H^{s+\dl}(\T)$ for any $n \in \N$.
\end{enumerate}
\end{proposition}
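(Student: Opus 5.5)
We construct the sequence as an explicit small perturbation of $(u_0,v_0)$:
\[
u_{0,n} := u_0 + \eps_n \phi + \psi_n, \qquad v_{0,n} := v_0 + \eps_n \phi' + \psi_n',
\]
where $\eps_n\to 0$ is a scalar, $\phi,\phi'$ are fixed trigonometric polynomials used to adjust $\Im M$, and $\psi_n,\psi_n'$ are rough functions with $\|\psi_n\|_{H^s}+\|\psi_n'\|_{H^s}\to 0$. These rough pieces are chosen so that $P_\pm\psi_n,P_\pm\psi_n'\in H^s\setminus H^{s+\dl}$. For instance, take
\[
\ft{\psi}(k) = \jb{k}^{-s-\frac12}\big(\log(2+|k|)\big)^{-1}, \qquad k\neq 0 ,
\]
so that $\psi\in H^s$ but $\psi\notin H^{s+\dl}$ for every $\dl>0$, on both the positive and the negative frequencies. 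Then set $\psi_n := n^{-1}\psi$ and $\psi_n' := n^{-1}\psi$.

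\textbf{Step 1: roughness.} Property (iii) must hold for $u_{0,n}$ itself, not only for the perturbation. Since $u_0\in H^s$ only, $P_+ u_0$ might be exactly $-P_+\psi_n$ modulo $H^{s+\dl}$, which would cancel the roughness. This can happen for at most one value of the scaling parameter, because if two values $c\ne c'$ both cancelled, then $(c-c')P_+\psi\in H^{s+\dl}$, a contradiction. The same holds for $P_-$ and for $v$. So we choose $\psi_n = c_n\psi$ with $c_n\to0$, avoiding at most four bad values of $c_n$. This gives (iii) and keeps (i), since the added trigonometric polynomials do not affect membership in $H^{s+\dl}$.

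\textbf{Step 2: $\Im M\ne 0$.} The map $\eps\mapsto M(u_0+\eps\phi+\psi_n,\,v_0+\psi_n')$ is affine in $\eps$ with slope $\int_\T \phi\,(v_0+\psi_n')\,dx$. If $v_0+\psi_n'\not\equiv 0$ (which is automatic, since it is not in $H^{s+\dl}$), choose a Fourier mode $m$ with $\widehat{(v_0+\psi_n')}(m)\ne 0$ and set $\phi = e^{-imx}$. The slope is then a nonzero complex number $a_n$. Replacing $\eps$ by $i\eps\,\overline{a_n}/|a_n|$, the imaginary part becomes $\Im M(\text{at }\eps=0) + \eps|a_n|$. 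This vanishes for at most one real $\eps$, so we can pick $0<\eps_n<1/n$ with $\Im M\neq 0$, giving (ii).

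\textbf{Main obstacle.} The delicate point is keeping (ii) and (iii) compatible while $\phi$ depends on $n$ through the mode $m$. Because $\psi$ has all nonzero modes, we can fix $m$ once: pick any $m\neq 0$ with $\widehat{v_0}(m)+c_n\ft\psi(m)\ne 0$. This holds for all but at most one value of $c_n$, which we add to the finite list of forbidden values in Step 1. Since $\|e^{-imx}\|_{H^s}$ is then fixed, $\eps_n\|\phi\|_{H^s}\to 0$, and (i) follows.
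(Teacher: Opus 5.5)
Your proof is correct, but it is organized differently from the paper's.

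\textbf{The paper's route.} The paper argues by cases.
\begin{itemize}
\item When $\Im M(u_0,v_0)\neq 0$, it adds the lacunary series $\frac1\l\sum_{k\in 2^\N}k^{-s-\dl}e^{\pm ikx}$ to $u_0$. It does so only on the frequency half-lines where $P_\pm u_0$ is already in $H^{s+\dl}$, so rough plus smooth is rough and no cancellation can occur. It keeps $\Im M\neq0$ by $L^2$-continuity of $M$.
\item When $\Im M(u_0,v_0)=0$, it first shifts the zero modes of $u_0$ and/or $v_0$, with sub-cases according to whether $P_0u_0$ and $P_0v_0$ vanish. It then applies the first case to each shifted datum and extracts a diagonal sequence.
\end{itemize}

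\textbf{Your route.} You replace every case split by a generic-parameter argument.
\begin{itemize}
\item The difference trick shows that each of the four conditions $P_\pm u_{0,n},P_\pm v_{0,n}\notin H^{s+\dl}$ excludes at most one value of $c_n$. So you never need to know which projections of $(u_0,v_0)$ are already rough.
\item After rotating the mode $e^{-imx}$, $M$ is affine in $\eps$ with slope $i|a_n|\neq0$. So only one value of $\eps_n$ is excluded, the value of $\Im M(u_0,v_0)$ plays no role, and no diagonal argument is needed.
\end{itemize}

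\textbf{Comparison.} Your argument is shorter and gives a stronger conclusion: all four projections are rough, not just one per pair. Also, with the logarithmic weight, the excluded values of $c_n$ over all $\dl>0$ together are still at most five, so one sequence serves every $\dl\in(0,1)$. The paper's construction is more explicit; for instance, its sequence is simply constant when $\Im M(u_0,v_0)\neq0$ and both $P_\pm u_0$ are already rough.

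\textbf{Write-up.} Drop the abandoned choices $\psi_n=n^{-1}\psi$ and $\phi'$, and state the order of selection explicitly:
\begin{enumerate}
\item choose $c_n\to0$ avoiding at most five values;
\item note that then $a_n=\ft{v_0}(m)+c_n\ft\psi(m)\neq0$;
\item choose $\eps_n\in(0,1/n)$ avoiding the single value $-\Im M(u_0+c_n\psi,v_0+c_n\psi)/|a_n|$.
\end{enumerate}
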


\begin{proof}
We consider the two cases:
\begin{itemize}
\item Case 1: $\Im M(u_0, v_0) \neq 0$;
\item Case 2: $\Im M(u_0, v_0) = 0$.
\end{itemize}

Case 1: First, we consider the case $(u_0, v_0) \in \H^{s+\dl}(\T)$.
Set
\[
u_{0}^{(\l)}(x) = u_0 (x) + \frac 1\l \sum_{k \in 2^\N} k^{-s-\dl} (e^{i k x} + e^{-ikx})
\]
for $\l \in \N$.
It follows from $u_0 \in H^{s+\dl}(\T)$ that
\begin{equation*}
P_\pm u_0^{(\l)} = P_\pm (u_0^{(\l)} - u_0) + P_\pm u_0 \notin H^{s+\dl}(\T)
\end{equation*}
for any $\l \in \N$.

A simple calculation yields that
\begin{equation}
\| u_{0}^{(\l)} - u_0 \|_{H^s}
= \bigg( \frac 2{\l^2}  \sum_{k \in 2^{\N}} k^{-2\dl} \bigg)^{\frac 12}
\les \frac 1\l
\label{app2}
\end{equation}
for any $\l \in \N$.
Note that
\begin{align*}
\Big|
\Im \int_\T u_0^{(\l)} v_0 dx
- \Im \int_\T u_0 v_0 dx
\Big|
\le
\| u_{0}^{(\l)} - u_0 \|_{L^2} \|v_0\|_{L^2}.
\end{align*}
From \eqref{app2} and $s>\frac 32$,
we obtain that
\[
\lim_{\l \to \infty} \Im \int_\T u_{0}^{(\l)} v_0 dx
= \Im \int_\T u_0 v_0 dx.
\]
Hence, there exists $N \in \N$ such that $\Im M (u_{0}^{(\l)}, v_0) \neq 0$ for $\l \ge N$.
Thus, we can take $\{(u_{0, n}, v_{0, n})\}_{n \in \N} = \{(u_{0}^{(n+N)}, v_0)\}_{n \in \N}$.

Next, we consider the case $(u_0, v_0) \notin \H^{s+\dl}(\T)$.
Without loss of generality, we assume $u_0 \notin H^{s+\dl}(\T)$.
We separate this case into the following three cases:
\begin{itemize}
\item
Case 1-1:
$P_+ u_0 \notin H^{s+\dl}(\T)$ and $P_- u_0 \notin H^{s+\dl}(\T)$;

\item
Case 1-2:
$P_+ u_0 \in H^{s+\dl}(\T)$ and $P_- u_0 \notin H^{s+\dl}(\T)$;

\item
Case 1-3:
$P_+ u_0 \notin H^{s+\dl}(\T)$ and $P_- u_0 \in H^{s+\dl}(\T)$.
\end{itemize}

Case 1-1:
We can take $\{(u_{0,n}, v_{0,n})\}_{n \in \N} = \{(u_{0}, v_0)\}_{n \in \N}$.

Case 1-2:
We set
\[
u_{0, +}^{(\l)} (x) = u_0(x) + \frac 1\l \sum_{k \in 2^\N} k^{-s-\dl} e^{i k x}
\]
for $\l \in \N$.
The same argument as in the case $(u_0, v_0) \in \H^{s+\dl}(\T)$  shows that
we can take
$\{(u_{0,n}, v_{0,n})\}_{n \in \N} = \{(u_{0,+}^{(n+N)}, v_0)\}_{n \in \N}$ for some large $N \in \N$.

Case 1-3:
This case is similarly handled as in Case 1-2.

Case 2: Assume that $\Im M (u_0, v_0) = 0$.
We claim that there exists $\{(u_{0}^{(\l)}, v_{0}^{(\l)} ) \}_{\l \in \N} \subset \H^s(\T)$ such that
\begin{equation}
\| (u_0^{(\l)}, v_0^{(\l)}) - (u_{0}, v_{0}) \|_{\H^s} = \frac 1\l 
\quad \text {and} \quad \Im M (u_{0}^{(\l)}, v_{0}^{(\l)}) \neq 0.
\label{c:app}
\end{equation}

If $P_0 v_0 \neq 0$, we set
\[
(u_0^{(\l)}, v_0^{(\l)}) =
\left\{
\begin{aligned}
&\Big( u_0 + \frac 1\l, v_0 \Big), && \text {if $\Im P_0 v_0 \neq 0$}, \\
&\Big( u_0 + \frac i\l, v_0 \Big), && \text {if $\Im P_0 v_0 = 0$}.
\end{aligned}
\right.
\] 
Then, a direct calculation shows that
\[
\| (u_0^{(\l)}, v_0^{(\l)}) - (u_0 , v_0) \|_{\H^s} 
= \frac 1\l.
\]
Moreover, we obtain
\begin{equation*}
\Im M (u_0^{(\l)}, v_0^{(\l)})
=
\Im \int_\T \Big(u_0 v_0 + \frac 1\l v_0 \Big) \; dx 
= 
\frac 1\l \Im P_0 v_0
\neq 0
\end{equation*}
when $\Im P_0 v_0 \neq 0$.
Similarly, we obtain
\[
\Im M (u_0^{(\l)}, v_0^{(\l)})
=
\Im \int_\T \Big( u_0 v_0 + \frac i\l v_0 \Big) \; dx 
= 
\frac 1\l \Re P_0 v_0
\neq 0
\]
provided with $\Im P_0 v_0 = 0$.
Since $P_0 v_0 \neq 0$,
$\{(u_0^{(\l)}, v_0^{(\l)})\}_{\l \in \N}$ satisfies \eqref{c:app}.

If $P_0 u_0 \neq 0$, define
\[
(u_0^{(\l)}, v_0^{(\l)}) =
\left\{
\begin{aligned}
&\Big( u_0, v_0 + \frac 1\l \Big), && \text {if $\Im P_0 u_0 \neq 0$}, \\
&\Big( u_0, v_0 + \frac i\l \Big), && \text {if $\Im P_0 u_0 = 0$}.
\end{aligned}
\right.
\] 
The same argument as in the case of $\Im P_0 v_0 \neq 0$ implies that
$\{(u_0^{(\l)}, v_0^{(\l)})\}_{\l \in \N}$ satisfies \eqref{c:app}.

When $P_0 u_0 = 0$ and $P_0 v_0 = 0$ hold,
we take $\{(u_0^{(\l)}, v_0^{(\l)})\}_{\l \in \N}$ as
\[
(u_0^{(\l)}, v_0^{(\l)}) = \Big( u_0 + \frac 1{\sqrt \l}, v_0 + \frac i {\sqrt \l} \Big).
\]
Then, we have 
\[
\| (u_0^{(\l)}, v_0^{(\l)}) - (u_0, v_0) \|_{\H^s} = \frac 1\l.
\]
Moreover, 
\[
\begin{aligned}
\Im \int_\T  u_0^{(\l)} v_0^{(\l)} dx 
&=
\Im \int_\T u_0 v_0 dx + \frac 1{\sqrt \l}\Im P_0 v_0 + \frac 1{\sqrt \l} \Re P_0 u_0 
+ \frac 1{\l}  
\\
&= \frac 1{\l} \neq 0.
\end{aligned}
\]
Therefore, \eqref{c:app} holds.

The same argument as in Case 1 with \eqref{c:app} yields that
for any $\l \in \N$,
there exists $\{(u_0^{(\l, m)}, v_0^{(\l, m)}) \}_{m \in \N}$ such that
the following conditions hold:
\begin{enumerate}
\item $(u_0^{(\l, m)}, v_0^{(\l, m)})$ converges to $(u_0^{(\l)}, v_0^{(\l)})$ 
in $\H^s(\T)$ as $m \to \infty$;
\item $\Im M (u_0^{(\l, m)}, v_0^{(\l, m)}) \neq 0$ for any $m \in \N$;
\item $(P_\pm u_0^{(\l, m)}, P_\pm v_0^{(\l, m)}) \notin \H^{s+\dl}(\T)$ for any $m \in \N$.
\end{enumerate}
In particular,
it follows from the proof of the Case 1 that
\begin{equation}
\| (u_0^{(\l, m)}, v_0^{(\l, m)}) - (u_0^{(\l)}, v_0^{(\l)}) \|_{\H^s}
\les
\frac 1m.
\label{app3}
\end{equation}
The triangle inequality, \eqref{c:app}, and \eqref{app3} imply that
\[
\begin{aligned}
&\| (u_0^{(n,n)}, v_0^{(n,n)}) - (u_0, v_0) \|_{\H^s}
\\
&\le
\| (u_0^{(n, n)}, v_0^{(n, n)}) - (u_0^{(n)}, v_0^{(n)}) \|_{\H^s}
+ \| (u_0^{(n)}, v_0^{(n)}) - (u_0, v_0) \|_{\H^s}
\\
&\les \frac 1n.
\end{aligned}
\]
Thus, we can take $\{ (u_{0, n}, v_{0, n}) \}_{n \in \N}$ as 
$ \{ (u_0^{(n,n)}, v_0^{(n,n)}) \}_{n \in \N}$.
This concludes the proof.
\end{proof}

\section{The proof of Theorem \ref{thm:WP}}
\label{SEC:WP}
In this section, we prove the well-posedness for \eqref{CLL} in $\H_0^s(\T)$ 
defined in \eqref{subset} by the contraction mapping argument.
For simplicity, we consider a solution to \eqref{CLL} forward in time.

As in Section \ref{SEC:IP}, we set
\begin{equation*}
\begin{aligned}
\uu &:= \dx u, \quad \fv := \dx v,
\\
\Ld &:= - \frac i2  \dx^{-1} (u v),
\quad U := e^{-\Ld} \uu, \quad V:= e^\Ld \fv.  
\end{aligned}
\end{equation*}
Then,
$(U, V)$ satisfies
\begin{equation}
\left\{
\begin{aligned}
&i \dt U - \dx^2 U  
= - (\Im M(u,v)) \dx U + G_1,
\\
&i \dt V + \dx^2 V 
= - (\Im M(u,v)) \dx V 
+ G_2,
\end{aligned}
\right.
\label{CLLg2}
\end{equation}
where $G_1$ and $G_2$ is defined in \eqref{d:G}.

\begin{proof}[Proof of Theorem \ref{thm:WP}]
In this proof, we assume that $(u_0, v_0) \in \H_0^s(\T)$.

Fix $s > \frac 32$. Let $(U_0, V_0) \in \H^{s-1}(\T)$.
We consider the following integral equations which correspond to \eqref{CLL} and \eqref{CLLg2}:
\begin{align}
u(t) &= e^{- i t \dx^2} u_0 
+ \int_0^t e^{- i (t - t') \dx^2} (u v e^{\Ld} U) (t')  dt',
\label{intu}
\\
v(t) &= e^{i t \dx^2} v_0 
+ \int_0^t e^{i (t - t') \dx^2} (u v e^{-\Ld} V) (t')  dt',
\label{intv}
\\
U(t) &= e^{- i t \dx^2 + i (\int_0^t \Im M(u(t'), v(t')) dt') \dx} U_0 
\notag
\\
&\qquad- 
i \int_0^t e^{- i (t - t') \dx^2 + i (\int_{t'}^t \Im M(u(\tau), v(\tau)) d\tau) \dx} G_1(t') dt',
\label{intU}
\\
V(t) &= e^{ i t \dx^2 + i (\int_0^t \Im M(u(t'), v(t')) dt') \dx} V_0
\notag 
\\
&\qquad- 
i \int_0^t e^{i (t - t') \dx^2 + i (\int_{t'}^t \Im M(u(\tau), v(\tau)) d\tau) \dx} G_2(t') dt'.
\label{intV}
\end{align}
We write the right-hand sides of \eqref{intu}, \eqref{intv}, \eqref{intU}, and \eqref{intV} as
$\G_1(u,v,U,V)$, $\cdots$, $\G_4(u,v,U,V)$, respectively.

We define
\[
A^s := \H_0^s(\T) \times \H^s(\T).
\]
In the following,
we write $((f, g), (F, G)) \in A^s$ as $(f,g,F,G)$ for simplicity.
We also define the norm in $A^s$ as
\[
\|(f, g, F, G)\|_{A^s} := \|(f, g) \|_{\H^s} + \|(F, G)\|_{\H^s}
\]
for $(f,g,F,G) \in A^s$.

Set
\[
K:= 2\|(u_0, v_0, U_0, V_0)\|_{A^{s-1}}.
\]
For $T > 0$, we define 
\[
\begin{aligned}
B_T 
:= 
\{ 
(u, v, U, V) \in C([0,T]; A^{s-1}) \mid 
\|(u,v,U,V)\|_{L_T^\infty A^{s-1}} \le K
\}.
\end{aligned}
\]
A direct calculation with \eqref{es:dxu} yields that there exists $C(s,K) > 0$ which is independent of $t$ such that 
\begin{equation}
\begin{aligned}
&\| \G_1 (u,v,U,V) \|_{L_T^\infty H^{s-1}}
\\
&\le
\|u_0\|_{H^{s-1}} + \int_0^T \| (u v e^{\Ld} U) (t') \|_{H^{s-1}} dt'
\\
&\les
\|u_0\|_{H^{s-1}} + T \|u\|_{L_T^\infty H^{s-1}} \|v\|_{L_T^\infty H^{s-1}} 
\|U\|_{L_T^\infty H^{s-1}}
\\
&\le
\|u_0\|_{H^{s-1}} 
+ C(s, K) T
\end{aligned}
\label{solu}
\end{equation}
for $(u,v,U,V) \in B_T$.
Similarly, we have
\begin{equation}
\begin{aligned}
\| \G_2 (u,v,U,V) \|_{L_T^\infty H^{s-1}}
&\le
\|v_0\|_{H^{s-1}} + \int_0^t \| (u v e^{- \Ld} V) (t') \|_{H^{s-1}} dt'
\\
&\le
\|v_0\|_{H^{s-1}} + C(s, K) T
\end{aligned}
\label{solv}
\end{equation}
for $(u,v,U,V) \in B_T$.

It follows from $\Im M(u,v) = 0$ for $(u,v,U,V) \in B_T$ and Lemma \ref{lem:funda} that
\begin{equation}
\begin{aligned}
&\|\G_3(u,v,U,V)\|_{L_T^\infty H^{s-1}}
\\
&\le
\sup_{t \in [0,T]} \Big( \|U_0\|_{H^{s-1}}+
\Big \| \int_0^t e^{- i (t - t') \dx^2 } G_1(t') dt' \Big\|_{H^{s-1}} \Big)
\\
&\le
\|U_0\|_{H^{s-1}}
+
C(s, K) T
\end{aligned}
\label{solU}
\end{equation}
for $(u,v,U,V) \in B_T$.
The same calculation as in \eqref{solU} yields that for $(u,v,U,V) \in B_T$,
\begin{equation}
\begin{aligned}
\| \G_4 (u,v,U,V) \|_{L_T^\infty H^{s-1}}
\le \|V_0\|_{H^{s-1}} 
+ C(s, K) T.
\end{aligned}
\label{solV}
\end{equation}

By \eqref{solu}--\eqref{solV}, we obtain that 
\[
\begin{aligned}
&\G (u,v,U,V) 
\\
&:= (\G_1(u,v,U,V), \G_2(u,v,U,V), \G_3(u,v,U,V), \G_4(u,v,U,V))
\end{aligned}
\] 
is a map on $B_T$ by taking $T > 0$ sufficiently small.
A similar calculation as in \eqref{solu}--\eqref{solV} shows that for 
$(u,v, U, V)$, $(u', v', U', V') \in B_T$, there exists 
$C'(s, K) > 0$ such that
\[
\begin{aligned}
&\| \G((u, v, U, V)) - \G ((u',v',U',V')) \|_{L_T^\infty A^{s-1}}
\\
&\le
C'(s, K) T \|(u,v, U, V) - (u', v', U', V')\|_{L_T^\infty A^{s-1}}.
\end{aligned}
\] 
Hence, $\G$ is a contraction map on $B_T$ by taking $T > 0$ small enough.
Thus, \eqref{intu}--\eqref{intV} has a solution in $C([0,T]; A^{s-1}(\T))$.
The uniqueness and the continuous dependence follow from a standard argument.
We thus omit the details here.

Set 
\[
U_0 := e^{-\Ld(0)} \dx u_0, \quad V_0:= e^{\Ld(0)} \dx v_0.
\] 
Since $(U_0, V_0) \in \H^{s-1}(\T)$, 
there exists a solution $(u, v, U, V) \in C([0,T]; A^{s-1})$ to \eqref{intu}--\eqref{intV} with the initial data
$(u_0,v_0,U_0,V_0) \in A^{s-1}$. 
Then, \eqref{CLLg2} and the uniqueness of \eqref{intu}--\eqref{intV} imply that 
\[
U = e^{-\Ld} \dx u, \quad V = e^{\Ld} \dx v.
\]
In particular, $e^{-\Ld} \dx u \in C([0,T]; H^{s-1}(\T))$. 
Hence, with \eqref{p:eLd}, we obtain
\[
\begin{aligned}
\|u\|_{L_T^\infty H^s}
&= 
\|(1+\dx^2) u \|_{L_T^\infty H^{s-2}}
\\
&\le
\|u\|_{L_T^\infty H^{s-2}} + \| e^{\Ld} e^{-\Ld} \dx u \|_{L_T^\infty H^{s-1}}
\\
&\les
\|u\|_{L_T^\infty H^{s-2}} + \| e^{\Ld} \|_{L_T^\infty H^{s-1}} \| e^{-\Ld} \dx u \|_{L_T^\infty H^{s-1}} 
\\
&\le
C(s, \|u\|_{L_T^\infty H^{s-1}}, \|v\|_{L_T^\infty H^{s-1}}, \|U\|_{L_T^\infty H^{s-1}}).
\end{aligned}
\]
A similar calculation above shows that
\[
\|v\|_{L_T^\infty H^s} 
\le 
C(s, \|u\|_{L_T^\infty H^{s-1}}, \|v\|_{L_T^\infty H^{s-1}}, \|V\|_{L_T^\infty H^{s-1}}).
\]
Therefore, we conclude that the equation \eqref{CLL} has a unique solution 
\[
(u, v) \in C([0,T]; \H_0^s(\T)).
\]
The continuity of the flow map for \eqref{CLL} in $\H_0^s(\T)$
follows immediately from continuous dependence for \eqref{intu}--\eqref{intV} in $A^{s-1}$.
\end{proof}
\mbox{}

\noindent
{\bf 
Acknowledgements.}
This work was
supported by JST SPRING Grant Number JPMJSP2132.
The author would like to express his deepest gratitude to Mamoru Okamoto for his support and encouragement throughout this research.


\begin{thebibliography}{99}
\bibitem{AKNS74}
M. J. Ablowitz, D. J. Kaup, A. C. Newell, H. Segur,
{\it The inverse scattering transform-Fourier analysis for nonlinear problems},
Studies in Appl. Math. \textbf{53} (1974), no. 4, 249–315.

\bibitem{AbMu13}
M. J. Ablowitz, Z. H. Musslimani,
{\it Integrable Nonlocal Nonlinear Schr\"odinger Equation},
Phys. Rev. Lett. \textbf{110} (2013), no. 6-8, 064105.

\bibitem{AbMu17}
M. J. Ablowitz, Z. H. Musslimani, 
{\it Integrable Nonlocal Nonlinear Equations},
Stud. Appl. Math. \textbf{139} (2017), no. 1, 7–59.

\bibitem{AmSi15}
D. Ambrose, G. Simpson,
{\it Local existence theory for derivative nonlinear Schr\"odinger equations with noninteger power nonlinearities},
SIAM J. Math. Anal. \textbf{47} (2015), no. 3, 2241–2264.

\bibitem{BaPe22}
H. Bahouri, G. Perelman,
{\it Global well-posedness for the derivative nonlinear Schr\"odinger equation},
Invent. Math. \textbf{229} (2022), no. 2, 639–688.

\bibitem{BaPe24}
H. Bahouri, G. Perelman,
{\it Global well-posedness for the derivative nonlinear Schr\"odinger equation with periodic boundary condition},
Int. Math. Res. Not. IMRN 2024, no. 24, 14479–14518.

\bibitem{CLL79}
H. H. Chen, Y. C. Lee, C. S. Liu, 
{\it Integrability of nonlinear Hamiltonian system by inverse scattering method},
Phys. Scr. \textbf{20} (1979) 490--492.

\bibitem{CLW23}
J. Chen, Y. Lu, B. Wang,
{\it Global Cauchy problems for the nonlocal (derivative) NLS in $E^s_\s$},
J. Differential Equations \textbf{344} (2023), 767–806.

\bibitem{GrHe08}
A. Gr\"unrock, S. Herr,
{\it Low regularity local well-posedness of the derivative nonlinear Schr\"odinger equation with periodic initial data},
SIAM J. Math. Anal. \textbf{39} (2008), no. 6, 1890–1920.

\bibitem{GuWu17}
Z. Guo, Y. Wu,
{\it Global well-posedness for the derivative nonlinear Schr\"odinger equation in $H^{\frac12}(\R)$}, 
Discrete Contin. Dyn. Syst. \textbf{37} (2017), no. 1, 257–264.

\bibitem{HKNV26}
B. Harrop-Griffiths, R. Killip, M. Ntekoume, M. Visan,
{\it Global well-posedness for the derivative nonlinear Schr\"odinger equation in $L^2(\R)$},
J. Eur. Math. Soc. (JEMS) \textbf{28} (2026), no. 2, 843–924.

\bibitem{Ha93}
N. Hayashi,
{\it The initial value problem for the derivative nonlinear Schr\"odinger equation in the energy space},
Nonlinear Anal. \textbf{20} (1993), no. 7, 823–833.

\bibitem{HaOz92}
N. Hayashi, T. Ozawa,
{\it On the derivative nonlinear Schr\"odinger equation},
Phys. D \textbf{55} (1992), no. 1--2, 14--36.

\bibitem{He06}
S. Herr,
{\it On the Cauchy problem for the derivative nonlinear Schr\"odinger equation with periodic boundary condition},
Int. Math. Res. Not. (2006), Art. ID 96763, 33 pp.


%
\bibitem{JLPS20}
R. Jenkins, J. Liu, P. Perry, C. Sulem,
{\it Global existence for the derivative nonlinear Schr\"odinger equation with arbitrary spectral singularities},
Anal. PDE \textbf{13} (2020), no. 5, 1539–1578.


%

\bibitem{KNV23}
R. Killip, M. Ntekoume, M. Visan,
{\it On the well-posedness problem for the derivative nonlinear Schr\"odinger equation},
Anal. PDE \textbf{16} (2023), no. 5, 1245–1270.

\bibitem{KiTs18}
N. Kishimoto, Y. Tsutsumi,
{\it Ill-posedness of the third order NLS equation with Raman scattering term},
Math. Res. Lett. \textbf{25} (2018), no. 5, 1447--1484.

\bibitem{KoOk25b}
T. Kondo, M. Okamoto,
{\it Well- and ill-posedness of the Cauchy problem for semi-linear Schr\"odinger equations on the torus}, to appear in Funkcial. Ekvac.

\bibitem{Ku84}
A. Kundu,
{Landau-Lifshitz and higher-order nonlinear systems gauge generated from nonlinear Schrödinger type equations},
J. Math. Phys. \textbf{25} (1984), no. 12, 3433–3438.


\bibitem{MMW07}
J. Moses, B. Malomed, F. Wise, 
{\it Self-steepening of ultrashort optical pulses without self-phase-modulation},
Phys. Rev. A \textbf{76} (2007), 1-4.

\bibitem{Mo17}
R. Mosincat,
{\it Global well-posedness of the derivative nonlinear Schr\"odinger equation with periodic boundary condition in $H^{\frac12}$},
J. Differential Equations \textbf{263} (2017), no. 8, 4658–4722.

\bibitem{NaWa25}
K. Nakanishi, B. Wang,
{\it Global wellposedness of general nonlinear evolution equations for distributions on the Fourier half space},
J. Funct. Anal. \textbf{289} (2025), no. 8, Paper No. 111004, 70 pp.


\bibitem{SSZ19}
Y. Shi, S. F. Shen, S. L. Zhao,
{\it Solutions and connections of nonlocal derivative nonlinear Schr\"odinger equations}, 
Nonlinear Dyn. \textbf{95} (2019), 1257–1267.

\bibitem{Ta99}
H. Takaoka,
{\it Well-posedness for the one-dimensional nonlinear Schr\"odinger equation with the derivative nonlinearity},
Adv. Differential Equations \textbf{4} (1999), no. 4, 561–580.


\bibitem{TsFu80}
M. Tsutsumi, I. Fukuda,
{\it On solutions of the derivative nonlinear Schr\"odinger equation. Existence and uniqueness theorem},
Funkcial. Ekvac. \textbf{23} (1980), no. 3, 259–277.

\bibitem{TsFu81}
M. Tsutsumi, I. Fukuda,
{\it On solutions of the derivative nonlinear Schr\"odinger equation. II},
Funkcial. Ekvac. \textbf{24} (1981), no. 1, 85–94.
\end{thebibliography}
\end{document}